\documentclass[psamsfonts]{amsart}
\usepackage[utf8]{inputenc}
\usepackage{amsfonts}
\usepackage{hyperref}
\hypersetup{
pdftitle={On the quantum differentiation of smooth real-valued functions},
pdfsubject={Mathematics, Quantum Algebra, Q-calculus, Time-scale calculus},
pdfauthor={Kolosov Petro},
pdfkeywords={Quantum calculus, Quantum algebra, Power quantum calculus, Quantum difference, q-derivative, Jackson derivative, q-calculus, q-difference, Time Scale Calculus, Series Expansion, Taylor's theorem, Taylor's formula, Taylor's series, Taylor's polynomial, Analytic function, Series representation, Derivative, Differential calculus, Difference Equations, Numerical Differentiation, Polynomial, Exponential function, Exponentiation, Binomial coefficient, Binomial theorem, Binomial expansion, Mathematics, Numerical analysis, Mathematical analysis, arXiv, Preprint}
}
\usepackage{amsmath}
\usepackage{xcolor}
\usepackage{amsthm}
\usepackage{pdflscape}
\usepackage{pgfplots}
\usepackage{mathrsfs}
\usepackage[backend=bibtex]{biblatex} %
\addbibresource{main.bib}
\newtheorem{thm}{Theorem}[section]

\newtheorem{lem}[thm]{Lemma}

\theoremstyle{definition}

\theoremstyle{remark}
\newtheorem{rem}[thm]{Remark}

\makeatletter
\let\c@equation\c@thm
\makeatother
\numberwithin{equation}{section}


\title{Another Topological Proof of the Infinitude of Prime Numbers}

\author{Jhixon Macías}
\address{Jhixon Macías\newline
University of Puerto Rico at Mayaguez, Mayaguez, PR, USA\newline
United States of America}
\email{jhixon.macias@upr.edu}

\keywords{Primes, Topology, Greatest Common Divisor}

\subjclass[2020]{11A41; 54G05; 54H11}

\begin{document}

\begin{abstract}
We present a new topological proof of the infinitude of prime numbers with a new topology. Furthermore, in this topology, we characterize the infinitude of any non-empty subset of prime numbers.
\end{abstract}

\maketitle
In \cite{Jhixon2023}, we introduce a new topology $\tau$ on the set of positive integers $\mathbf{N}$ generated by the base \begin{equation*}
    \beta := \{\sigma_n: n\in\mathbf{N}\}, \text{ where } \sigma_n := \{m\in\mathbf{N}: \gcd(n,m)=1\}.
\end{equation*}
\textcolor{black}{Indeed,}
\begin{thm}[\cite{Jhixon2023}]
\textcolor{black}{$\beta$ is a base for some topology on $\mathbf{N}$.}
\end{thm}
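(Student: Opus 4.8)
The plan is to verify the two standard conditions that characterize when a family of subsets forms a base for some topology: that $\beta$ covers $\mathbf{N}$, and that for any two members of $\beta$ and any point in their intersection, some member of $\beta$ lies between that point and the intersection. I would recall this criterion explicitly, since both conditions reduce here to simple arithmetic facts about the coprimality sets $\sigma_n$.

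For the covering condition, I would observe that $\gcd(1,m)=1$ for every $m\in\mathbf{N}$, so $\sigma_1=\mathbf{N}$ itself belongs to $\beta$. Hence every point of $\mathbf{N}$ lies in a member of $\beta$, and the covering condition holds at once. (Equally, since $\gcd(n,1)=1$ for all $n$, the point $1$ lies in every $\sigma_n$; but the identity $\sigma_1=\mathbf{N}$ is the cleaner observation.)

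The heart of the argument is the identity
\begin{equation*}
\sigma_a\cap\sigma_b=\sigma_{ab}\qquad(a,b\in\mathbf{N}).
\end{equation*}
To prove it I would argue through prime factorizations: a number $m$ is coprime to $a$ and coprime to $b$ exactly when $m$ shares no prime factor with $a$ and none with $b$, which is to say $m$ shares no prime factor with the product $ab$, i.e.\ $\gcd(ab,m)=1$. Thus $\gcd(a,m)=1$ and $\gcd(b,m)=1$ hold simultaneously if and only if $\gcd(ab,m)=1$, giving the displayed equality. This expresses the multiplicativity of coprimality to a fixed integer, and it shows that $\beta$ is closed under pairwise intersections.

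With this identity in hand the refinement condition is automatic: given $\sigma_a,\sigma_b\in\beta$ and any $x\in\sigma_a\cap\sigma_b$, the set $\sigma_{ab}\in\beta$ satisfies $x\in\sigma_{ab}=\sigma_a\cap\sigma_b$, so it trivially sits between $x$ and the intersection. Both conditions being met, $\beta$ is a base for a topology on $\mathbf{N}$. I expect no genuine obstacle here: the only step requiring care is the intersection identity, and even that follows immediately from unique factorization, so the proof amounts to assembling the base criterion with this one arithmetic lemma.
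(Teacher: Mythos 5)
Your proposal is correct and follows essentially the same route as the paper: covering via $\sigma_1=\mathbf{N}$, and the refinement condition via the identity $\sigma_a\cap\sigma_b=\sigma_{ab}$, which the paper justifies by the same coprimality observation. No substantive difference.
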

\begin{proof}
\textcolor{black}{It is clear that $\displaystyle\bigcup_{n\in\mathbf{N}}\sigma_n=\sigma_1=\mathbf{N}$.} \textcolor{black}{On the other hand, note that for every $n, m\in\mathbf{N}$, we have $\sigma_{nm}=\sigma_n\cap\sigma_m$\footnote{Note that for all $x, n, m \in \mathbf{N}$, we have $\gcd(x,n)=\gcd(x,m)=1$ if and only if $\gcd(x,nm)=1$.}. Therefore $\beta$ is a base for some topology on $\mathbf{N}$.}
\end{proof} 


\begin{rem}
Note that 
\begin{equation*}
    \begin{split}
    \sigma_n:&= \left\{\begin{array}{lcc}
     \ \ \ \ \ \mathbf{N}& \text{if} & n=1  \\
     \displaystyle\bigcup_{\substack{1\leq m< n\\ \gcd(m,n)=1}}n\left(\mathbf{N}\cup \{0\}\right)+m & \text{if} & n>1 
\end{array}\right.\\
    \end{split}
\end{equation*}
since for every integer $x$, it holds that $\gcd(n,m)=\gcd(n,nx+m)$, see \cite[Theorem 1.9]{niven1991introduction}. It is easily deduced from here that $\sigma_n$ is infinite for every positive integer $n$. Furthermore, it is deduced that the topology $\tau$ is strictly coarser than Golomb's topology.
\end{rem}

The topological space $\mathbf{X}:=(\mathbf{N},\tau)$ does not satisfy the $\mathrm{T}_0$ axiom, is hyperconnected, and ultraconnected. Among other properties, one that will be useful is that for \textcolor{black}{each integer $n>1$}, we have:
\begin{equation}\label{e1}
    \mathbf{cl}_{\mathbf{X}}(\{n\})=\bigcap_{\substack{p|n}}\mathbf{M}_p
\end{equation}
Here, $\mathbf{cl}_{\mathbf{X}}(\{n\})$ is the closure in $\mathbf{X}$ of the singleton set $\{n\}$, $p$ is a prime number, and $\mathbf{M}_p$ is the set of all multiples of $p$.

\textcolor{black}{To prove that equation (\ref{e1}) holds, we require the following Lemma.}
\begin{lem}[\cite{Jhixon2023}]\label{lm1}
\textcolor{black}{If $p$ is a prime number, then $\mathbf{cl}_{\mathbf{X}}(\{p\})=\mathbf{M}_p$.}
\end{lem}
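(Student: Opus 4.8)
The plan is to use the standard characterization of closure in terms of a base: a point $x$ lies in $\mathbf{cl}_{\mathbf{X}}(\{p\})$ precisely when every basic open set $\sigma_n$ containing $x$ also contains $p$. Since by definition $x\in\sigma_n$ means $\gcd(n,x)=1$, this reduces the topological assertion to a purely arithmetic equivalence: $x\in\mathbf{cl}_{\mathbf{X}}(\{p\})$ if and only if, for every $n\in\mathbf{N}$ with $\gcd(n,x)=1$, one also has $\gcd(n,p)=1$. I would then prove the two inclusions separately.

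First I would establish $\mathbf{M}_p\subseteq\mathbf{cl}_{\mathbf{X}}(\{p\})$. Suppose $p\mid x$, and let $n$ be any index with $\gcd(n,x)=1$. Then $p\nmid n$, for otherwise $p$ would divide both $n$ and $x$, forcing $p\mid\gcd(n,x)=1$, which is absurd. Since $p$ is prime, $p\nmid n$ yields $\gcd(n,p)=1$, so $p\in\sigma_n$. Thus every basic neighborhood of $x$ contains $p$, whence $x\in\mathbf{cl}_{\mathbf{X}}(\{p\})$.

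For the reverse inclusion $\mathbf{cl}_{\mathbf{X}}(\{p\})\subseteq\mathbf{M}_p$, I would argue by contraposition, exhibiting a single basic open set that separates $x$ from $p$. The natural witness is $\sigma_p$ itself: assuming $p\nmid x$, primality of $p$ gives $\gcd(p,x)=1$, so $x\in\sigma_p$; on the other hand $\gcd(p,p)=p\neq 1$, so $p\notin\sigma_p$. Hence $\sigma_p$ is an open neighborhood of $x$ that misses $p$, which shows $x\notin\mathbf{cl}_{\mathbf{X}}(\{p\})$. Combining the two inclusions gives the claimed equality.

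I expect the only delicate point to be the base-versus-open distinction, namely remembering that the closure may be tested against basic open sets rather than arbitrary opens. Beyond that, the crucial arithmetic input is that primality of $p$ makes the conditions ``$p\nmid n$'' and ``$\gcd(n,p)=1$'' equivalent, a fact used in both directions; this is precisely where the hypothesis that $p$ is prime (rather than an arbitrary integer) enters, and everything else is a routine $\gcd$ computation.
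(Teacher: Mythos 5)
Your proposal is correct and follows essentially the same route as the paper's own proof: both directions use the identical arguments, namely testing against the basic open set $\sigma_p$ (which contains $x$ but not $p$ when $p\nmid x$) for the inclusion $\mathbf{cl}_{\mathbf{X}}(\{p\})\subseteq\mathbf{M}_p$, and the observation that $\gcd(np,k)=1$ forces $\gcd(p,k)=1$ for the reverse inclusion. The only difference is cosmetic (contraposition versus contradiction, and your slightly more explicit justification that primality makes $p\nmid n$ equivalent to $\gcd(n,p)=1$).
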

\begin{proof}
\textcolor{black}{Let $x\in\mathbf{cl}_{\mathbf{X}}(\{p\})$. Now, if $x\notin\mathbf{M}_p$, then $\gcd(x,p)=1$. Consequently, $x\in\sigma_p$, which implies that $p\in\sigma_p$, a contradiction. Therefore, $x\in\mathbf{M}_p$.} \textcolor{black}{On the other hand, suppose $x\in\mathbf{M}_p$. Then, $x=np$ for some positive integer $n$. Now, take $\sigma_k\in\beta$ such that $x\in\sigma_k$. This implies that $\gcd(np,k)=\gcd(x,k)=1$, so $\gcd(p,k)=1$, and thus, $p\in\sigma_k$. Hence, $x\in \mathbf{cl}_{\mathbf{X}}(\{p\})$.}
\end{proof}
\begin{thm}[\cite{Jhixon2023}]
\textcolor{black}{Equation (\ref{e1}) holds.}
\end{thm}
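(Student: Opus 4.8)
The plan is to reduce the closure of a composite $n$ to the closures of the primes dividing it and then invoke Lemma \ref{lm1}. I would first record the standard description of closure in a space with a prescribed base: since $\beta$ is a base for $\tau$, a point $x$ lies in $\mathbf{cl}_{\mathbf{X}}(\{n\})$ precisely when every basic open set $\sigma_k$ containing $x$ also contains $n$. Unwinding $\sigma_k=\{m:\gcd(m,k)=1\}$, this says: for every $k\in\mathbf{N}$, if $\gcd(x,k)=1$ then $\gcd(n,k)=1$.

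Next I would isolate the one arithmetic fact that does the real work: for any $k$, one has $\gcd(n,k)=1$ if and only if $\gcd(p,k)=1$ for every prime $p$ dividing $n$. This is immediate from unique factorization, since $\gcd(n,k)>1$ exactly when some prime divides both $n$ and $k$, and such a prime must lie among the prime divisors of $n$. Substituting this equivalence into the closure condition turns the statement ``$\gcd(x,k)=1\Rightarrow\gcd(n,k)=1$ for all $k$'' into ``$\gcd(x,k)=1\Rightarrow\gcd(p,k)=1$ for all $k$ and all primes $p\mid n$.'' Since the quantifiers over $k$ and over $p$ are both universal, their order may be exchanged freely, and the resulting condition is exactly that $x\in\mathbf{cl}_{\mathbf{X}}(\{p\})$ for every prime $p\mid n$. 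In other words,
\[
\mathbf{cl}_{\mathbf{X}}(\{n\})=\bigcap_{p\mid n}\mathbf{cl}_{\mathbf{X}}(\{p\}).
\]
Finally, Lemma \ref{lm1} identifies each factor on the right as $\mathbf{cl}_{\mathbf{X}}(\{p\})=\mathbf{M}_p$, so the intersection becomes $\bigcap_{p\mid n}\mathbf{M}_p$, which is the right-hand side of equation (\ref{e1}).

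I expect the only delicate points to be the clean justification of the quantifier exchange and, relatedly, the legitimacy of testing closure membership against basic open sets rather than arbitrary open sets. Both are routine—the former because swapping two universal quantifiers is always valid, the latter because every open set containing $x$ contains a basic open set containing $x$—but each deserves to be stated explicitly. If one prefers to avoid the reduction identity, the two inclusions can instead be proved directly: for $\supseteq$, assume every prime $p\mid n$ divides $x$ and observe that any $\sigma_k$ with $\gcd(x,k)=1$ forces $\gcd(n,k)=1$; for $\subseteq$, if some prime $p\mid n$ failed to divide $x$ then $\sigma_p$ would be an open set separating $x$ from $n$, contradicting $x\in\mathbf{cl}_{\mathbf{X}}(\{n\})$. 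Either route reaches the conclusion with essentially the same content.
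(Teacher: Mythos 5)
Your proposal is correct and rests on the same two ingredients as the paper's proof: the equivalence $\gcd(n,k)=1\iff\gcd(p,k)=1$ for all primes $p\mid n$, and Lemma \ref{lm1}. You package the argument as a single chain of equivalences (via the intermediate identity $\mathbf{cl}_{\mathbf{X}}(\{n\})=\bigcap_{p\mid n}\mathbf{cl}_{\mathbf{X}}(\{p\})$) where the paper checks the two inclusions separately, but the content is essentially identical.
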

\begin{proof}
    
    \textcolor{black}{Let $n>1$ a integer. Take $x\in \displaystyle\bigcap_{\substack{p|n}}\mathbf{M}_p$. By Lemma \ref{lm1}, we have that $x\in\mathbf{cl}_{\mathbf{X}}(\{p\})$ for every $p$ such that $p|n$. Therefore, for every $\sigma_k\in\beta$ with $x\in\sigma_k$, it holds that $p\in\sigma_k$ for all $p$ such that $p|n$. This implies $\gcd(k,n)=1$, and thus, $n\in\sigma_k$. Consequently, $x\in \mathbf{cl}_{\mathbf{X}}(\{n\})$.} \textcolor{black}{On the other hand, suppose $x\in \mathbf{cl}_{\mathbf{X}}(\{n\})$. Consider $\sigma_k\in\beta$ such that $x\in\sigma_k$. Then, $n\in\sigma_k$, and therefore $\gcd(n,k)=1$. This implies that $\gcd(p,k)=1$ for every $p|n$, and so $p\in\sigma_k$ for all $p$ such that $p|n$. Thus, by Lemma \ref{lm1}, we conclude that $x\in \displaystyle\bigcap_{\substack{p|n}}\mathbf{M}_p$.}
\end{proof}
\begin{rem}
\textcolor{black}{If $n=1$, then $\mathbf{cl_X}(\{n\})=\mathbf{N}$. Moreover note that, for every positive integer $n>1$, we have $1\notin\mathbf{cl_X}(\{n\})$ since $1\in\sigma_n$.}
\end{rem}

The objective of this short note is to provide a new topological proof of the infinitude of prime numbers, distinct from the topological proofs presented by Fürstenberg \cite{furstenberg1955infinitude} and Golomb \cite{golomb1959connected}, which, in fact, are similar except for the topology they use.


Let $\mathbf{P}$ denote the set of prime numbers. Additionally, for any set $A$, $\# A$ denotes the cardinality of $A$, and $\aleph_0:=\#\mathbf{N}$. Consider the following Proposition.

\begin{thm}\label{th1}
$\#\mathbf{P}=\aleph_0$ if and only if $\mathbf{P}$ is dense in $\mathbf{X}$.
\end{thm}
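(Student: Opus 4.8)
The plan is to reduce topological density to a purely arithmetic condition on the basic open sets $\sigma_n$, and then to dispatch each implication with elementary facts about prime factorizations.

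First I would record a working criterion for density. Since $\beta$ is a base for $\tau$, a subset of $\mathbf{X}$ is dense exactly when it meets every nonempty basic open set $\sigma_n$. Crucially, every $\sigma_n$ is nonempty, indeed $1\in\sigma_n$ for all $n$ because $\gcd(n,1)=1$. Hence $\mathbf{P}$ is dense in $\mathbf{X}$ if and only if $\mathbf{P}\cap\sigma_n\neq\emptyset$ for every $n\in\mathbf{N}$. Unfolding the definition $\sigma_n=\{m\in\mathbf{N}:\gcd(n,m)=1\}$, this is equivalent to saying that for every positive integer $n$ there exists a prime $p$ with $\gcd(n,p)=1$, i.e. a prime that does not divide $n$. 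This translation is the conceptual heart of the argument; everything afterward is number theory.

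For the forward implication, assume $\#\mathbf{P}=\aleph_0$ and fix $n\in\mathbf{N}$. By the fundamental theorem of arithmetic $n$ has only finitely many prime divisors, so since $\mathbf{P}$ is infinite there is a prime $p$ dividing no factor of $n$; then $\gcd(n,p)=1$, whence $p\in\mathbf{P}\cap\sigma_n$. By the criterion above, $\mathbf{P}$ is dense. For the converse I would argue by contraposition: assume $\mathbf{P}$ is finite, say $\mathbf{P}=\{p_1,\dots,p_k\}$, and set $N=p_1\cdots p_k$ (the interesting case, since $2\in\mathbf{P}$ forces $N\geq 2$). Every prime divides $N$, so no prime lies in $\sigma_N=\{m:\gcd(N,m)=1\}$, giving $\mathbf{P}\cap\sigma_N=\emptyset$ with $\sigma_N$ a nonempty open set. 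Thus $\mathbf{P}$ is not dense, which is precisely the contrapositive of ``dense implies $\#\mathbf{P}=\aleph_0$.'' Combining the two implications yields the equivalence.

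I expect no serious obstacle. The only point demanding care is the reduction in the first paragraph: one must verify both that it suffices to test density against basic open sets and that each $\sigma_n$ is genuinely nonempty, so that the topological condition is equivalent to the arithmetic one rather than being vacuously satisfied. As an alternative to the direct check for the converse, one could instead invoke the closure formula $(\ref{e1})$ together with Lemma $\ref{lm1}$, observing that $\bigcup_{p\in\mathbf{P}}\mathbf{cl}_{\mathbf{X}}(\{p\})=\bigcup_{p\in\mathbf{P}}\mathbf{M}_p$ omits $1$ and, when $\mathbf{P}$ is finite, fails to cover $\mathbf{N}$; but the explicit witness $\sigma_N$ is shorter and more transparent.
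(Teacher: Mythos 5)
Your proof is correct and follows essentially the same route as the paper: both directions hinge on testing $\mathbf{P}$ against the basic open sets $\sigma_n$, with the converse using the witness $\sigma_{p_1\cdots p_k}$ exactly as in the paper's argument (yours phrased as a contrapositive, the paper's phrased directly). The only cosmetic difference is that in the forward direction you select a prime not dividing $n$ via the finiteness of $n$'s prime divisors, while the paper simply picks a prime $p>n$.
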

\begin{proof}
Suppose there are infinitely many prime numbers. Then, for any positive integer $n>1$, we can choose a prime $p$ such that $p>n$, and consequently, $p\in\sigma_n$ since $\gcd(n,p)=1$. Therefore, $\mathbf{P}$ is dense in $\mathbf{X}$. On the other hand, assume that $\mathbf{P}$ is dense in $\mathbf{X}$. Let $\{p_1,p_2,\dots, p_k\}$ be a finite collection of prime numbers and consider the  non-empty basic element $\sigma_x$ where $x=p_1\cdot p_2\cdots p_k$. Note that none of the $p_i$ belong to $\sigma_x$, but since $\mathbf{P}$ is dense in $\mathbf{X}$, there must be another prime number $q$, different from each $p_i$, such that $q\in\sigma_x$. Consequently, there are infinitely many prime numbers.
\end{proof}
Theorem \ref{th1} indicates that we only need to prove the density of $\mathbf{P}$ in $\mathbf{X}$ to establish the infinitude of prime numbers. Precisely, that is what we will demonstrate.

\textcolor{black}{To achieve our goal, consider the set $\mathbf{N_1} := \mathbf{N}\setminus\{1\}$ and the subspace topology }
\begin{equation*}
    \textcolor{black}{\mathbf{\tau_{1}} := \{\mathbf{N_1} \cap \mathcal{O} : \mathcal{O} \in \tau\} \ \ \text{generated by the base} \ \ \mathbf{\beta_1} := \{\mathbf{N_1} \cap \sigma_n : \sigma_n \in \beta\}.}
\end{equation*}
\textcolor{black}{Also, consider the topological subspace $\mathbf{X_1} := (\mathbf{N_1}, \mathbf{\tau_1})$} and the following Lemma: 

\begin{lem}\label{lm2}
    $\mathbf{P}$ is dense in $\mathbf{X_1}$ if and only if $\mathbf{P}$ is dense in $\mathbf{X}$.
\end{lem}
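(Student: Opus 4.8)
The plan is to test density on basic open sets only, using the standard fact that a set is dense exactly when it meets every non-empty member of a base. The first observation I would record is that $1 \in \sigma_n$ for every $n \in \mathbf{N}$, since $\gcd(n,1)=1$; hence every basic open set of $\mathbf{X_1}$ has the form $\mathbf{N_1} \cap \sigma_n = \sigma_n \setminus \{1\}$. Since $\mathbf{P} \subseteq \mathbf{N_1}$ (every prime is at least $2$), a prime lying in $\sigma_n$ is the same thing as a prime lying in $\sigma_n \setminus \{1\}$, and this coincidence is the bridge between the two spaces.

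For the implication that density in $\mathbf{X}$ forces density in $\mathbf{X_1}$, I would take an arbitrary non-empty basic open set $\mathbf{N_1} \cap \sigma_n$ of $\mathbf{X_1}$. Being non-empty, it is contained in the non-empty open set $\sigma_n$ of $\mathbf{X}$, so density of $\mathbf{P}$ in $\mathbf{X}$ yields a prime $p \in \sigma_n$; as $p \geq 2$ we get $p \in \sigma_n \setminus \{1\} = \mathbf{N_1} \cap \sigma_n$. This direction is essentially immediate.

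For the converse, I would start from a non-empty basic open set $\sigma_n$ of $\mathbf{X}$ and need to produce a prime inside it using only density in $\mathbf{X_1}$. The one point that requires care is that the corresponding basic open set $\sigma_n \setminus \{1\}$ of $\mathbf{X_1}$ must itself be non-empty; otherwise there is nothing to feed into the hypothesis. This is precisely where I would invoke the Remark, which shows that $\sigma_n$ is infinite for every positive integer $n$, so deleting the single point $1$ still leaves a non-empty (indeed infinite) set. Density of $\mathbf{P}$ in $\mathbf{X_1}$ then gives a prime $p \in \sigma_n \setminus \{1\} \subseteq \sigma_n$, which completes the argument.

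I expect the only genuine obstacle to be this non-emptiness bookkeeping around the excluded point $1$, rather than anything topologically deep; once the infinitude of each $\sigma_n$ is in hand, the equivalence falls out. As an alternative packaging, I could instead check that $\mathbf{N_1}$ is itself dense in $\mathbf{X}$ — again because every $\sigma_n$ contains $1$ and is infinite, hence meets $\mathbf{N_1}$ — and then quote the general principle that for a dense subspace $A \subseteq X$ and $D \subseteq A$, the set $D$ is dense in $A$ if and only if it is dense in $X$. Both routes rest on the same key fact, so I would present the direct one in order to keep the note self-contained.
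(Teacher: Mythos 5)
Your argument is correct and is in substance the same as the paper's: the paper uses exactly your ``alternative packaging,'' namely that $\mathbf{X_1}$ is dense in $\mathbf{X}$ together with transitivity of density, declaring both steps ``clear.'' Your direct verification on basic open sets merely makes explicit the one point the paper glosses over --- that $\sigma_n \setminus \{1\}$ is still non-empty because each $\sigma_n$ is infinite --- so either presentation is fine.
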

\begin{proof}
It is clear that if $\mathbf{P}$ is dense in $\mathbf{X}$, then $\mathbf{P}$ is dense in $\mathbf{X_1}$. On the other hand, it is clear that $\mathbf{X_1}$ is dense in $\mathbf{X}$. So, by transitive property of density, if $\mathbf{P}$ is dense in $\mathbf{X_1}$, then $\mathbf{P}$ is dense in $\mathbf{X}$.
\end{proof}

Now, let's prove that $\mathbf{P}$ is dense in $\mathbf{X_1}$.

\begin{thm}\label{th2}
$\mathbf{P}$ is dense in $\mathbf{X_1}$.
\end{thm}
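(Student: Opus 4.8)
The plan is to show directly that every non-empty open set of $\mathbf{X_1}$ contains a prime; since the sets $\mathbf{N_1}\cap\sigma_n$ form a base of $\tau_1$, it suffices to prove that each non-empty basic open set $\mathbf{N_1}\cap\sigma_n$ meets $\mathbf{P}$. So I would fix such an $n$ and an element $m\in\mathbf{N_1}\cap\sigma_n$, which by definition means $m\geq 2$ and $\gcd(m,n)=1$.

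The key step is the observation that $\sigma_n$ is closed under taking divisors: if $m\in\sigma_n$ and $q\mid m$, then any common divisor of $q$ and $n$ also divides $\gcd(m,n)=1$, whence $\gcd(q,n)=1$ and so $q\in\sigma_n$. Now, since $m\geq 2$, it has at least one prime factor $q$. By the previous observation $q\in\sigma_n$, and being prime, $q\geq 2$ forces $q\in\mathbf{N_1}$. Therefore $q\in\mathbf{N_1}\cap\sigma_n$ and $q\in\mathbf{P}$, so the chosen basic open set meets $\mathbf{P}$. Running this over all non-empty basic open sets yields that $\mathbf{P}$ is dense in $\mathbf{X_1}$.

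I expect the only real subtlety --- and the reason for having passed to $\mathbf{X_1}$ --- to be the role of the point $1$. In $\mathbf{X}$ a basic set $\sigma_n$ with $n>1$ always contains $1$ (since $\gcd(n,1)=1$), and $1$ carries no prime factor, so it is not a useful witness; removing it guarantees that every element of a non-empty basic open set is at least $2$ and hence supplies a prime divisor. The number-theoretic content I would lean on is exactly this elementary fact together with the divisor-closure of $\sigma_n$; note moreover that a concrete witness is always available, for instance a prime factor of $n+1$, since $\gcd(n,n+1)=1$ --- which is essentially Euclid's argument reappearing in topological form. No deeper obstacle seems to arise, so the argument should be short once the reduction to basic open sets is made.
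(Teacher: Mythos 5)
Your proof is correct, but it takes a different route from the paper's. You argue directly on the base: every non-empty basic open set $\mathbf{N_1}\cap\sigma_n$ contains some $m\geq 2$ with $\gcd(m,n)=1$, and any prime factor $q$ of $m$ satisfies $\gcd(q,n)\mid\gcd(m,n)=1$, so $q\in\mathbf{N_1}\cap\sigma_n\cap\mathbf{P}$; this is a clean, self-contained verification of density that never needs the closure formula. The paper instead works with closures of singletons: it invokes equation (\ref{e1}) (via Lemma \ref{lm1}) to get $\mathbf{cl}_{\mathbf{X_1}}(\{p\})=\mathbf{M}_p$, observes that $\bigcup_{p\in\mathbf{P}}\mathbf{M}_p=\mathbf{N_1}$ by the fundamental theorem of arithmetic, and sandwiches this union inside $\mathbf{cl}_{\mathbf{X_1}}(\mathbf{P})\subset\mathbf{N_1}$. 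Both arguments bottom out in the same number-theoretic fact --- every integer $\geq 2$ has a prime divisor, and that divisor inherits coprimality to $n$ --- so in substance they are dual phrasings of one idea; yours is the more elementary and shorter, since it bypasses Lemma \ref{lm1} entirely, while the paper's version reuses the closure machinery $\mathbf{cl}_{\mathbf{X}}(\{p\})=\mathbf{M}_p$ that it has already developed and that underlies the later discussion of hyperconnectedness and Theorem \ref{thf}. Your closing observation that a prime factor of $n+1$ gives an explicit witness in $\sigma_n$ (Euclid's argument in topological dress) is accurate and also explains why every basic open set of $\mathbf{X_1}$ is non-empty, a point worth stating explicitly when you reduce density to basic open sets.
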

\begin{proof}
In any topological space, it holds that the union of closures of subsets of that space is contained in the closure of the union of those sets. Therefore,
\begin{equation}\label{e2}
\bigcup_{p\in\mathbf{P}}\mathbf{cl}_{\mathbf{X_1}}(\{p\})\subset\mathbf{cl}_{\mathbf{X_1}}\left(\bigcup_{p\in\mathbf{P}}\{p\}\right)=\mathbf{cl}_{\mathbf{X_1}}(\mathbf{P})\subset\mathbf{N_1}
\end{equation}

On the other hand, from equation (\ref{e1}), it follows that
\begin{equation}\label{e3}
\bigcup_{p\in\mathbf{P}}\mathbf{cl}_{\mathbf{X_1}}(\{p\})= \bigcup_{p\in\mathbf{P}}\left(\mathbf{cl}_{\mathbf{X}}(\{p\})\cap\mathbf{N_1}\right)=\bigcup_{p\in\mathbf{P}}\mathbf{cl}_{\mathbf{X}}(\{p\})=\bigcup_{p\in\mathbf{P}}\mathbf{M}_p
\end{equation}

However, using the fundamental theorem of arithmetic, it can be easily shown that
\begin{equation}\label{e4}
\bigcup_{p\in\mathbf{P}}\mathbf{M}_p=\mathbf{N_1}
\end{equation}

Then, by using equations (\ref{e2}), (\ref{e3}), and (\ref{e4}), we conclude that $\mathbf{cl}_{\mathbf{X_1}}(\mathbf{P})=\mathbf{N_1}$, i.e., $\mathbf{P}$ is dense in $\mathbf{X_1}$.
\end{proof}

From Theorem \ref{th1}, Lemma \ref{lm2} and Theorem \ref{th2}, we can deduce that

\begin{thm}\label{th3}
There are infinitely many prime numbers.
\end{thm}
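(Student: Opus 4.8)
The plan is to obtain the statement with no fresh argument at all, by simply threading together the three results already established. First I would invoke Theorem \ref{th2}, which asserts that $\mathbf{P}$ is dense in the subspace $\mathbf{X_1}$. Next I would apply the backward implication of Lemma \ref{lm2}—the transitivity-of-density observation—to lift this density from $\mathbf{X_1}$ up to the ambient space, yielding that $\mathbf{P}$ is dense in $\mathbf{X}$. Finally I would invoke the direction of Theorem \ref{th1} that extracts infinitude from density, concluding that $\#\mathbf{P}=\aleph_0$, which is precisely the infinitude of the primes.

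Concretely, the whole deduction collapses to the implication chain
\[
\text{Theorem \ref{th2}} \;\Longrightarrow\; \mathbf{P} \text{ dense in } \mathbf{X_1} \;\overset{\text{Lem.\,\ref{lm2}}}{\Longrightarrow}\; \mathbf{P} \text{ dense in } \mathbf{X} \;\overset{\text{Thm.\,\ref{th1}}}{\Longrightarrow}\; \#\mathbf{P}=\aleph_0,
\]
so Theorem \ref{th3} is an immediate corollary and the write-up should stay to a single line.

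I do not expect any genuine obstacle at this last step, because all the substance was discharged upstream. The real work sits in Theorem \ref{th2}, where one uses the closure formula (\ref{e1}) to rewrite $\bigcup_{p\in\mathbf{P}}\mathbf{cl}_{\mathbf{X}}(\{p\})$ as $\bigcup_{p\in\mathbf{P}}\mathbf{M}_p$, and then calls on the fundamental theorem of arithmetic to see that every integer greater than $1$ carries a prime divisor, giving the covering identity $\bigcup_{p\in\mathbf{P}}\mathbf{M}_p=\mathbf{N_1}$. That covering is the single point at which the arithmetic of $\mathbf{N}$ enters, and having it in hand makes the present theorem a formality. Accordingly, I would present the proof as nothing more than the citation of the three preceding results in sequence.
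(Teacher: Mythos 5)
Your proposal is correct and matches the paper exactly: the paper derives Theorem \ref{th3} by precisely the chain Theorem \ref{th2} $\Rightarrow$ density in $\mathbf{X_1}$, Lemma \ref{lm2} $\Rightarrow$ density in $\mathbf{X}$, Theorem \ref{th1} $\Rightarrow$ $\#\mathbf{P}=\aleph_0$, stated in a single line with no further argument. Nothing is missing.
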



There are many proofs of the infinitude of prime numbers, such as Goldbach's Proof \cite[p.3]{aigner1999proofs}, Elsholtz's Proof \cite{elsholtz2012prime}, Erdos's Proof \cite{erdHos1938reihe}, Euler's Proof \cite{euler1748introductio}, and more recent ones, see , \cite{northshield2017two}, \cite{goral2020p}, \cite{mehta2022short} and \cite{goral2023green}. \textcolor{black}{Moreover, more than 200 proofs of the infinitude
of primes can be found in \cite{mevstrovic2012euclid}}. However, Fürstenberg's and Golomb's proofs are the only known a priori topological proofs, which, in essence, as mentioned earlier, are based on the same idea, except for the topology used. Despite being able to present a topological proof using the same idea with the topology $\tau$ (left as an exercise to the reader), we present a completely different proof, not only because of the topology used but also due to the underlying idea—proving  that $\mathbf{P}$ is dense in $\mathbf{X}$. 

Finally, we want to leave the reader with the following interesting Theorem.

\begin{thm}\label{thf}
Let  $A\subset\mathbf{P}$ non-empty.  Then, $A$ is dense in $\mathbf{X}$, if and only if, $\# A=\aleph_0$.
\end{thm}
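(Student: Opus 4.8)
The plan is to reduce density to the combinatorics of coprimality, exactly as in the proof of Theorem \ref{th1}, and then treat the two implications separately. First I would recall that a subset of $\mathbf{X}$ is dense precisely when it meets every non-empty basic open set $\sigma_n$, and that by the Remark establishing that $\sigma_n$ is infinite for every positive integer $n$, each $\sigma_n$ with $n>1$ is non-empty, while $\sigma_1=\mathbf{N}$. Hence density of $A$ reduces to verifying that $A\cap\sigma_n\neq\emptyset$ for every $n>1$.

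For the implication $\#A=\aleph_0\Rightarrow A$ dense, I would fix an arbitrary $n>1$ and exploit that $n$ has only finitely many prime divisors. Since $A$ is infinite, it must contain a prime $p$ that does not divide $n$ (for instance, any $p\in A$ with $p>n$); then $\gcd(n,p)=1$, so $p\in\sigma_n$ and therefore $p\in A\cap\sigma_n$. This is the same mechanism applied to $\mathbf{P}$ in Theorem \ref{th1}, now restricted to the subfamily $A$.

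For the converse I would argue by contraposition. Assuming $A=\{p_1,\dots,p_k\}$ is finite and non-empty, I would consider $x=p_1 p_2\cdots p_k$ and the basic set $\sigma_x$, which is non-empty since $x>1$. None of the $p_i$ lies in $\sigma_x$, because $\gcd(x,p_i)=p_i>1$, so $A\cap\sigma_x=\emptyset$ and $A$ fails to be dense. Consequently, density forces $A$ to be infinite; and since $A\subset\mathbf{P}\subset\mathbf{N}$ is a subset of a countable set, being infinite yields exactly $\#A=\aleph_0$.

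Since the whole statement is a direct generalization of Theorem \ref{th1} (recovered by taking $A=\mathbf{P}$), I expect no genuine obstacle in the argument. The only point demanding a line of care is the final cardinality bookkeeping, namely upgrading \emph{infinite} to $\#A=\aleph_0$, which is immediate from $A\subseteq\mathbf{N}$ and the definition $\aleph_0:=\#\mathbf{N}$.
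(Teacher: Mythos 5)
Your proposal is correct and takes essentially the same route as the paper, whose entire proof is the instruction to replace $\mathbf{P}$ by $A$ in the proof of Theorem \ref{th1}; your two implications (finding $p\in A$ coprime to any given $n>1$ from infinitude, and excluding a finite $A=\{p_1,\dots,p_k\}$ from $\sigma_{p_1\cdots p_k}$) are exactly that substitution carried out explicitly. The only addition is the final cardinality remark upgrading ``infinite'' to $\#A=\aleph_0$, which is harmless and immediate from $A\subset\mathbf{N}$.
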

\begin{proof}
Replace $\mathbf{P}$ with $A$ in the proof of Theorem \ref{th1}.
\end{proof}
Theorem \ref{thf} implies a new relationship between number theory and topology, at least we hope so. Indeed, to answer questions such as: are there infinitely many even perfect numbers? or its equivalent, are there infinitely many Mersenne primes? it suffices to check the density of these sets on $\mathbf{X}$. Certainly, it may not be easy, but it is possible. The advantage of working with $\mathbf{X}$ is that this space is hyperconnected, so any subset is either dense or nowhere dense.

\newpage
\printbibliography
\end{document}